\newtheorem{theorem}{Theorem}[section]
\newtheorem{lemma}[theorem]{Lemma}
\newtheorem{corollary}[theorem]{Corollary}
\newtheorem*{conjecture*}{Conjecture}
\theoremstyle{definition}
\theoremstyle{remark}
\newtheorem*{remark*}{remark}
\author{Runbo Li}
\address{The High School Affiliated to Renmin University of China International Curriculum Center, Beijing 100080, People's Republic of China}
\email{carey.lee.0433@gmail.com}
\title[]{Hybrid estimation of single exponential sums, exceptional characters and primes in short intervals}
\subjclass[2020]{11M06, 11N05, 11N37} 
\keywords{primes, short intervals, exponential sums, exceptional characters}
\begin{document}
	
\begin{abstract}
We provide a new hybrid estimation of single exponential sums, combining Van der Corput, Huxley and Bourgain's result. We also focus on primes in short intervals $(x-x^{\alpha},x]$ under the assumption of the existence of exceptional Dirichlet characters and get a small improvement of a 2004 result of Friedlander and Iwaniec. By using our new estimation of exponential sums, we extend the previous admissible range $0.4937 \leqslant \alpha \leqslant 1$ to $0.4923 \leqslant \alpha \leqslant 1$.
\end{abstract}

\maketitle


\section{Introduction}
The famous Riemann Hypothesis is equivalent to the asymptotic formulas
\begin{equation}
\psi(x):=\sum_{n \leqslant x} \Lambda(n)=x+O\left(x^{\frac{1}{2}+\varepsilon}\right) \quad \operatorname{and} \quad \pi(x)=\frac{x}{\log x}+O\left(x^{\frac{1}{2}+\varepsilon}\right),
\end{equation}
where $\Lambda(n)$ denotes the von Mangoldt function. These imply that
\begin{equation}
\psi(x)-\psi(x-x^{\alpha})=x^{\alpha}+O\left(x^{\frac{1}{2}+\varepsilon}\right) \quad \operatorname{and} \quad \pi(x)-\pi(x-x^{\alpha})=\frac{x^{\alpha}}{\log x}+O\left(x^{\frac{1}{2}+\varepsilon}\right)
\end{equation}
for $\frac{1}{2}+\varepsilon <\alpha \leqslant 1$. Clearly this shows that there is always a prime number in the short interval $(x-x^{\frac{1}{2}+\varepsilon},x]$. Unfortunately, we can't prove (2) unconditionally. Now the best unconditional result is due to Baker, Harman and Pintz. In \cite{BHP} they showed that $0.525 \leqslant \alpha \leqslant 1$ is admissible. But even we assume the Riemann Hypothesis, we can't extend the range $\frac{1}{2}+\varepsilon <\alpha \leqslant 1$ anymore.

In 2004, under the assumption of the existence of exceptional Dirichlet characters, Friedlander and Iwaniec \cite{FI2004} first extended the range of $\alpha$ to some numbers below $\frac{1}{2}$. Actually they proved the following theorem:
\begin{theorem}\label{FI0.4937}
([\cite{FI2004}, Theorem 1.1]).
Let $\chi=\chi_{D}$ denotes the real primitive character of conductor D, $x \geqslant D^{r}$ with $r=18289$ and $\frac{39}{79} \leqslant \alpha \leqslant 1$. Then we have
$$
\psi(x)-\psi(x-x^{\alpha})=x^{\alpha} \left\{1+O\left(L(1, \chi)(\log x)^{r^{r}}\right)\right\}
$$
and
$$
\pi(x)-\pi(x-x^{\alpha})=\frac{x^{\alpha}}{\log x}\left\{1+O\left(L(1, \chi)(\log x)^{r^{r}}\right)\right\},
$$
where $L(s, \chi)$ is the Dirichlet L-function.
\end{theorem}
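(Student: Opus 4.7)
The plan is to detect primes in $[x-x^\alpha,x]$ arithmetically, replacing $\Lambda$ by a twisted variant whose mean is controlled by $L(1,\chi)$. I would begin by introducing a smooth weight $w$ supported essentially on $[x-x^\alpha,x]$, so that $\psi(x)-\psi(x-x^\alpha)$ is well approximated by $\sum_n \Lambda(n)w(n)$, and then decompose $\Lambda$ via a combinatorial identity (Heath--Brown or Vaughan) into $(\log x)^{O(1)}$ sums of type I (one long smooth variable) and type II (genuine bilinear convolutions $\sum_{m,n} a_m b_n w(mn)$ with $M,N$ in dyadic ranges satisfying $MN\asymp x$).

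The exceptional character enters through an identity of the shape $\Lambda(n) = -\sum_{d\mid n}\chi(d)\log d + \text{remainder}$: because $L(1,\chi)$ is very small, the Dirichlet convolution $1*\chi$ has mean value $L(1,\chi)$, so replacing $\mu$ by $-\chi$ inside a standard identity introduces a free factor of $L(1,\chi)$ that one can pay in order to absorb Cauchy--Schwarz losses in type II estimates. For the type II sums I would then apply Cauchy--Schwarz in the short variable, smooth out $w(mn)$ by Poisson summation, and bound the resulting character sums using the large sieve or multiplicative structure; the net saving is precisely what the factor $L(1,\chi)$ purchases. For type I sums I would apply Poisson summation in the long variable, reducing to exponential sums $\sum_h \widehat{w}(h)\, e(f_h(\cdot))$ with simple phases, and attack them via van der Corput's method or the theory of exponent pairs.

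The main obstacle is the tightness of this balance below $\alpha=1/2$: unconditionally, short intervals of length $<x^{1/2}$ are inaccessible because Cauchy--Schwarz throws away the length of the $w$-support, and here the multiplicative gain $L(1,\chi)$ coming from the exceptional character is just enough --- combined with near-optimal exponential-sum bounds --- to recover the loss and cross the $1/2$ threshold. Coordinating the admissible type-I/type-II splitting with the exponential-sum inputs while tracking the $L(1,\chi)$ gain and the inevitable $\log$-power losses pins down the permissible range; pushing it all the way down to $39/79$ forces every ingredient to be simultaneously near-optimal, and the resulting $(\log x)^{r^r}$ factor simply records the polynomial losses accumulated through the iterated identities. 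The main term $x^\alpha$ and its analogue $\operatorname{Li}(x)-\operatorname{Li}(x-x^\alpha)$ are produced by the leading contribution of the $\chi$-twisted identity, in which $L(1,\chi)$ cancels against the $L(1,\chi)^{-1}$ coming from the substitution of $\mu$ by $-\chi$.
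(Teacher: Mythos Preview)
This theorem is quoted from \cite{FI2004} rather than proved in the present paper, but Section~4 reproduces the Friedlander--Iwaniec argument (with sharper constants) and so displays what the method actually is. Your proposal diverges from it in an essential way.

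The Friedlander--Iwaniec method does \emph{not} pass through a Heath--Brown/Vaughan decomposition followed by a type~I/type~II analysis. Instead one uses the exact convolution identity
\[
\Lambda = \lambda' * \nu, \qquad \lambda'(d)=\sum_{kl=d}\chi(k)\log l,\quad \nu(m)=\sum_{ab=m}\mu(a)\mu(b)\chi(b),
\]
and splits $\Lambda=\Lambda^{*}+\Lambda_{*}$ according to whether the $\nu$-variable $m$ is $\le D^{2}$ or $>D^{2}$. The piece $\psi^{*}$ is handled by an asymptotic for $\sum_{d\le x}\lambda'(d)$ with error $O(D^{1/3}x^{1/3+\varepsilon})$, producing the main term $y$ plus an $L(1,\chi)$-sized remainder (Lemma~\ref{l42}). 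The piece $\psi_{*}$ is bounded, after iterating on $\nu$, by differences of $\mathcal D(x;\rho_{*})$ with $\rho=1*1*\chi$. The entire analytic weight of the problem thus lands on the three-fold character divisor sum
\[
\Delta=\sum_{n_{1}n_{2}n_{3}\le x}\chi_{1}(n_{1})\chi_{2}(n_{2})\chi_{3}(n_{3})-\operatorname{Res}_{s=1}\Bigl(L(s,\chi_{1})L(s,\chi_{2})L(s,\chi_{3})\tfrac{x^{s}}{s}\Bigr),
\]
and it is the exponent in the estimate $\Delta\ll D^{38/75}x^{37/75+\varepsilon}$ (Theorem~\ref{FI-L}) that forces $\alpha\ge 39/79$.

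Your sketch misses this structure. There are no genuine type~II sums and no Cauchy--Schwarz step; the factor $L(1,\chi)$ does not enter as a ``saving to absorb Cauchy--Schwarz losses'' but rather as the leading coefficient in the asymptotics for $\mathcal D(x;\lambda')$ and $\mathcal D(x;\rho)$, so that once the main term $y$ is extracted every remaining term already carries an $L(1,\chi)$. Nor is there any cancellation of $L(1,\chi)$ against an $L(1,\chi)^{-1}$: the main term $y$ in $\psi^{*}(x)-\psi^{*}(x-y)$ comes out directly. The exponential-sum input you mention is indeed present, but it lives entirely inside the proof of the bound for $\Delta$ (a single three-variable hyperbola problem handled by van~der~Corput), not in a type~I/type~II balancing act. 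A Vaughan-style approach with Cauchy--Schwarz on bilinear pieces would, as you yourself note, lose the short-interval length $x^{\alpha}<x^{1/2}$, and your outline gives no concrete mechanism by which $L(1,\chi)$ would recover that loss; in the actual proof this difficulty never arises because the identity $\Lambda=\lambda'*\nu$ replaces $\mu$ by the nonnegative weight $\lambda'$ and the divisor-bounded $\nu$, avoiding bilinear estimation altogether.
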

\noindent In fact, their result strongly depends on a deep result involving product of three Dirichlet $L$-series by themselves (\cite{FI2005}, with only classical Van der Corput method):
\begin{theorem}\label{FI-L}([\cite{FI2005}, Theorem 4.2]).
Let $\chi_{j}\left(\bmod D_{j}\right)$ denote primitive characters and $D=D_{1} D_{2} D_{3}$. For any $x \geqslant 1$, we have
\begin{align}
\nonumber \Delta:= & \sum_{n_{1} n_{2} n_{3} \leqslant x} \chi_{1}\left(n_{1}\right) \chi_{2}\left(n_{2}\right) \chi_{3}\left(n_{3}\right)-\operatorname{Res}_{s=1} \left(L\left(s, \chi_{1}\right) L\left(s, \chi_{2}\right) L\left(s, \chi_{3}\right) \frac{x^s}{s}\right) \\
\nonumber \ll & D^{\frac{38}{75}} x^{\frac{37}{75}+\varepsilon}.
\end{align}
\end{theorem}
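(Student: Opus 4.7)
My plan is to treat $\Delta$ as a character-twisted analog of the classical three-dimensional divisor problem for $d_3(n)$, combining the hyperbola method with Polya--Vinogradov completion and the van der Corput method. First I would smooth the sharp cutoff $n_1 n_2 n_3 \leq x$ at a cost of $x^{\varepsilon}$ and pass to a contour integral representation $\Delta = \frac{1}{2\pi i}\int_{(c)} L(s,\chi_1) L(s,\chi_2) L(s,\chi_3) \widetilde{f}(s) x^s \, ds$, then shift the line of integration to $\operatorname{Re}(s) = 1/2$, where the residue picked up at $s=1$ produces exactly the subtracted main term.

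Next I would attack the integral on the critical line. Since the convexity bound for each $L(1/2+it,\chi_j)$ already gives only the wasteful estimate $D^{3/4+\varepsilon} x^{1/2}$, I would replace each $L$-factor by a finite character sum via the approximate functional equation, so that the task reduces to estimating $\sum_{n \sim N_j} \chi_j(n) n^{-1/2-it}$ uniformly in $t$ and in the ranges $N_j \ll \sqrt{D_j(1+|t|)}$. For each such sum I would apply Polya--Vinogradov completion, expanding the characters through their Gauss sums as Fourier series against frequencies $h/D_j$. The resulting triple sum becomes an exponential sum with smooth phase of the form $t \log n + (h_1/D_1 + h_2/D_2 + h_3/D_3) n$, to which a classical van der Corput exponent pair $(k,\ell)$ can be applied.

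The principal difficulty will be the optimization step: choosing the correct van der Corput exponent pair and a dyadic decomposition of the ranges $(N_1,N_2,N_3)$ so as to balance three types of loss, namely the trivial bound from very short sums, the Polya--Vinogradov cost $\sqrt{D_j}$ from completion, and the oscillatory savings from van der Corput. The explicit exponents $38/75$ and $37/75$ should emerge as the rational solution of a linear system reflecting this balance, and verifying that no range is left unattended while maintaining the stated uniform dependence on $D = D_1 D_2 D_3$ is the main bookkeeping challenge.
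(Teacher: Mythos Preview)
Your plan diverges substantially from the Friedlander--Iwaniec argument that underlies this theorem (and which is reproduced, with a sharper exponential sum input, in Section~3 of the present paper). The actual proof does \emph{not} shift a contour to $\Re(s)=\tfrac12$ and then bound three separate $L$-factors. Instead it applies a Voronoi-type summation (the functional equation for the triple product, [FI2005, Proposition~3.2]) directly to the character sum, producing a dual exponential sum with phase $\pm 3(n_1n_2n_3 x/D)^{1/3}$ and a remainder $(Dx^2/N)^{1/3}$. One then fixes the two shortest variables $n_1,n_2$, opens the remaining character $\bar\chi_3(n_3)$ via its Gauss sum (this is the only place completion enters), and bounds the resulting one-variable exponential sum in $n_3$ by a van der Corput derivative test. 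The exponents $38/75$ and $37/75$ come from balancing the van der Corput bound against the $(Dx^2/N)^{1/3}$ term; they are not the output of an exponent-pair optimization on the critical line.

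Your proposal has a concrete gap at the ``combination'' step. After applying the approximate functional equation to each factor you have three independent sums over $n_1,n_2,n_3$; completing each $\chi_j$ gives three independent linear phases $h_j n_j/D_j$, not a single phase in one variable $n$ as you wrote. There is no mechanism in your outline for coupling the three variables so that van der Corput in one variable sees the full oscillation. More fundamentally, estimating $\int |L_1 L_2 L_3|(1/2+it)\,dt$ by treating each factor separately cannot beat the $x^{1/2}$ barrier in the $x$-aspect; the improvement to $x^{37/75}$ in the divisor-problem literature always passes through Voronoi (the cube-root phase), which your plan omits entirely. Without that step the scheme cannot reach the stated bound.
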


In 2017, based on his previous work on higher order derivative tests for exponential sums \cite{Nowak2012}, Nowak \cite{Nowak1} \cite{Nowak2} combined the classic Van der Corput method with Huxley's "Discrete Hardy--Littlewood method" (see \cite{HuxleyV}) and obtained the following two estimations of $\Delta$:
\begin{theorem}\label{Nowak-L}
([\cite{Nowak1}, Theorem 1], [\cite{Nowak2}, Theorem 3]).
Put $D_{\max }:=\max \left(D_{1}, D_{2}, D_{3}\right)$, then for any $x \geqslant 1$, we have
\begin{equation}
\nonumber \Delta \ll x^{\varepsilon} \left(D^{\frac{1153}{5073}} D_{\max }^{\frac{474}{1691}} x^{\frac{2498}{5073}} +D^{\frac{14077}{81168}} D_{\max }^{\frac{54661}{162336}} x^{\frac{26507}{54112}} +D^{\frac{38017}{144981}} D_{\max }^{\frac{3986}{16109}} x^{\frac{71090}{144981}} +D^{\frac{856679}{2404602}} D_{\max }^{\frac{1281}{6764}} x^{\frac{2185055}{4809204}}\right) \tag{A}
\end{equation}
and
\begin{align}
\nonumber \Delta \ll &  x^{\varepsilon} \left(D^{\frac{1153}{5073}} D_{\max }^{\frac{474}{1691}} x^{\frac{2498}{5073}} +D^{\frac{3043}{12387}} D_{\max }^{\frac{1086}{4129}} x^{\frac{6086}{12387}} +D^{\frac{2719}{13362}} D_{\max }^{\frac{2719}{8908}} x^{\frac{13129}{26724}}+D^{\frac{1}{4}} D_{\max }^{\frac{1}{2}} x^{\frac{1}{4}} \right. \\
& \left. +D^{\frac{2}{7}} D_{\max }^{\frac{2}{7}} x^{\frac{3}{7}}
+D^{\frac{1}{3}} x^{\frac{1}{3}}
+D^{\frac{1}{6}} D_{\max }^{\frac{1}{2}} x^{\frac{1}{3}} +D^{\frac{328}{2719}} D_{\max }^{\frac{1}{2}} x^{\frac{2063}{5438}} +D^{\frac{205}{1422}} D_{\max }^{\frac{1}{2}} x^{\frac{253}{711}}\right). \tag{B}
\end{align}
\end{theorem}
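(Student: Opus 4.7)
The plan is to reduce to exponential-sum estimates via contour integration and approximate functional equations, and then feed in the strongest exponent-pair input available, namely the higher-order derivative tests of \cite{Nowak2012} combined with Huxley's Discrete Hardy--Littlewood method \cite{HuxleyV}, in place of the classical second-derivative Van der Corput input used in the proof of Theorem \ref{FI-L}.

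First, I would write
$$
\sum_{n_1 n_2 n_3 \leqslant x} \chi_1(n_1)\chi_2(n_2)\chi_3(n_3) = \frac{1}{2\pi i}\int_{(c)} L(s,\chi_1)L(s,\chi_2)L(s,\chi_3)\frac{x^s}{s}\,ds
$$
for some $c>1$ by Perron's formula, shift the contour to $\Re s = \tfrac12$, pick up the residue at $s=1$ (which produces exactly the main term subtracted off in the definition of $\Delta$), and reduce the problem to bounding, dyadically in $|t|=T$, the mean value
$$
\int_T^{2T} \bigl|L(\tfrac12+it,\chi_1) L(\tfrac12+it,\chi_2) L(\tfrac12+it,\chi_3)\bigr|\,dt.
$$
Applying the approximate functional equation to each $L$-factor (each sum being of length $\asymp (D_j T)^{1/2}$) and splitting modulo each $D_j$ via Poisson summation then converts this into the task of bounding a model exponential sum
$$
S \;=\; \sum_{M<m\leqslant 2M}\sum_{N<n\leqslant 2N}\sum_{K<k\leqslant 2K} e\bigl(T\log(mnk)+\Phi(m,n,k)\bigr),
$$
where $\Phi$ is the phase carried by the Gauss-sum twists and $MNK\asymp (D_1 D_2 D_3)^{1/2}T^{3/2}$, up to a factor of $x^{\varepsilon}$.

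The heart of the argument is then to estimate $S$ by singling out one variable as the exponential variable and handling the others trivially, or via Cauchy--Schwarz followed by a $B$-process, choosing the strongest exponent-pair bound as a function of the relative sizes of $M,N,K,T,D_1,D_2,D_3$. In ranges where the derivative ratio $T/y^k$ is favourable, the higher-derivative tests of \cite{Nowak2012} beat the usual $(A,B)$-process chain; in ranges where the phase has a near-integral level, Huxley's Discrete Hardy--Littlewood bound dominates. Each summand appearing on the right-hand side of (A), respectively (B), should arise as the optimum produced by one such input in one particular regime, with the multiple terms reflecting the fact that no single exponent pair is globally best. The apparent discrepancy between (A) and (B) is then simply that (B) keeps additional lower-order terms corresponding to the classical Van der Corput exponent pairs $(1/2,1/2)$, $(2/7,2/7)$, $(1/6,2/3)$, and their $A$- and $B$-process descendants, which become relevant in narrow sub-ranges.

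The step I expect to be most delicate is not any single exponential-sum estimate but the global parametric optimization: one must verify that for every admissible quadruple $(M,N,K,T)$ at least one of the listed summands dominates, which amounts to solving several interlinked linear programs in the exponents and inspecting many boundary cases along the faces where one of the $D_j$ equals $D_{\max}$. A secondary difficulty is checking the hypotheses of the higher-derivative tests for the specific phase $T\log(mnk)+\Phi$; these require nonvanishing lower bounds on several mixed derivatives, which must be verified by direct computation and which dictate the precise ranges of applicability that in turn determine the exponents $1153/5073$, $474/1691$, $2498/5073$, and so on.
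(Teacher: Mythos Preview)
This theorem is quoted from Nowak and not proved in the paper, but the paper's own proof of Theorem~\ref{New-L} follows the same template and shows what Nowak actually does. Your route is genuinely different from that template, and in a way that I do not believe leads to the stated exponents.

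Nowak (following Friedlander--Iwaniec \cite{FI2005}) does \emph{not} go through the critical-line integral of $L(s,\chi_1)L(s,\chi_2)L(s,\chi_3)$. Instead he invokes \cite[Proposition~3.2]{FI2005}, a Voronoi-type identity that produces directly the bound
\[
\Delta \ll \left| D^{1/6}x^{1/3}\sum_{n_1 n_2 n_3\leqslant N}\frac{\bar\chi_1(n_1)\bar\chi_2(n_2)\bar\chi_3(n_3)}{(n_1n_2n_3)^{2/3}}\,e\!\left(\pm 3\Bigl(\tfrac{n_1n_2n_3 x}{D}\Bigr)^{1/3}\right)\right| + x^{\varepsilon}\Bigl(\tfrac{Dx^2}{N}\Bigr)^{1/3},
\]
so the phase is the cube root $3(n_1n_2n_3 x/D)^{1/3}$, not $T\log(mnk)$. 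After dyadic decomposition one singles out the longest variable $n_3$, opens $\bar\chi_3$ by Gauss sums, and applies a one-variable higher-derivative test with $M=N_3$ and $T=(Px/D)^{1/3}$; the other two variables are estimated trivially. The exponents $\tfrac{1153}{5073}$, $\tfrac{474}{1691}$, $\tfrac{2498}{5073}$, etc.\ come from plugging Nowak's specific $r=5$ derivative-test bound into this cube-root phase and then optimizing the single free parameter $N$; the several terms in (A) are the four terms of that single derivative-test bound, not the outputs of different exponent pairs in different regimes.

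Your Perron/critical-line approach with phase $T\log(mnk)$ is a legitimate way to attack $\Delta$ in principle, but it is not the mechanism behind these particular constants, and the ``global parametric optimization over several exponent pairs'' you describe is not what is happening here. If you pursued your route you would need subconvexity-type bounds for the triple product on the critical line, and the arithmetic of the exponents would look quite different. The concrete gap is therefore the phase: replace your logarithmic model sum by the cube-root sum above, drop the multilinear Cauchy--Schwarz/$B$-process plan, and instead feed a single higher-order derivative test (Lemma~\ref{higher} with $r=5$, or Nowak's version) into the $n_3$-sum, then balance against the $(Dx^2/N)^{1/3}$ tail.
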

Obviously, both (A) and (B) in Theorem~\ref{Nowak-L} imply the same bound $\Delta \ll D^{\frac{2575}{5073}} x^{\frac{2498}{5073}+\varepsilon}$. In this paper, we further combine Nowak's work with Bourgain's new bound (see \cite{Bourgain}) and get some improvement on Theorem~\ref{Nowak-L}. We will give a detailed proof of one of our results which is the following one:
\begin{theorem}\label{New-L}
Put $D_{\max }:=\max \left(D_{1}, D_{2}, D_{3}\right)$, then for any $x \geqslant 1$, we have
\begin{align}
\nonumber \Delta \ll & x^{\varepsilon} \left(D^{\frac{118}{519}} D_{\max }^{\frac{97}{346}} x^{\frac{511}{1038}} +D^{\frac{121}{692}} D_{\max }^{\frac{467}{1384}} x^{\frac{675}{1384}} +D^{\frac{56039}{213309}} D_{\max }^{\frac{69941}{284412}} x^{\frac{419257}{853236}} +D^{\frac{17936}{50343}} D_{\max }^{\frac{131}{692}} x^{\frac{91507}{201372}}\right) \\
\nonumber \ll & D^{\frac{527}{1038}} x^{\frac{511}{1038}+\varepsilon}.
\end{align}
\end{theorem}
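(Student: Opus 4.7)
My plan is to follow Nowak's scheme \cite{Nowak1,Nowak2} for bounding $\Delta$ as closely as possible, substituting Bourgain's exponential sum estimate \cite{Bourgain} at the stage where the classical Van der Corput third-derivative test is applied, and then re-balancing every parameter.

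I would start from Perron's formula applied to the Dirichlet series $L(s,\chi_1)L(s,\chi_2)L(s,\chi_3) = \sum_m (\chi_1 \ast \chi_2 \ast \chi_3)(m) m^{-s}$, writing
$$
\sum_{n_1 n_2 n_3 \leqslant x} \chi_1(n_1)\chi_2(n_2)\chi_3(n_3) = \frac{1}{2\pi i} \int_{(c)} L(s,\chi_1) L(s,\chi_2) L(s,\chi_3) \frac{x^s}{s}\, ds + O(\cdots).
$$
Shifting the contour to $\Re(s) = \tfrac{1}{2}$ picks up exactly the stated residue. Then the approximate functional equations for the three $L$-functions, together with dyadic decomposition, reduce the problem to estimating triple exponential sums of the shape
$$
S(N_1, N_2, N_3; t) = \sum_{n_1 \sim N_1}\sum_{n_2 \sim N_2}\sum_{n_3 \sim N_3} \chi_1^{\ast}(n_1) \chi_2^{\ast}(n_2) \chi_3^{\ast}(n_3)\, e\!\left(-\frac{t}{2\pi} \log(n_1 n_2 n_3)\right),
$$
integrated against $|t| \leqslant T$ for a parameter $T$ to be optimized, where the $\chi_j^{\ast}$ are the primitive characters or their conjugates.

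Next, I would apply Nowak's combination of Huxley's Discrete Hardy--Littlewood method with repeated Van der Corput $A$- and $B$-processes to reduce $S$ to a one-dimensional exponential sum over, say, the variable $n_3$, handling the characters $\chi_1^{\ast}, \chi_2^{\ast}$ by Cauchy--Schwarz and the large sieve. At this final stage, rather than using the exponent pair $(\tfrac{1}{6}, \tfrac{2}{3})$ or $(\tfrac{2}{7}, \tfrac{4}{7})$ on which Nowak relies, I would plug in the sharper exponent pair obtained by Bourgain via the decoupling method. The various orders in which the $A$- and $B$-processes can be interleaved with character summation over $n_1, n_2$ produce several candidate bounds; these are what give rise to the four terms in the statement.

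Finally, I would optimize the cut-offs $N_1, N_2, N_3$ and $T$ against $D_1, D_2, D_3$, $D_{\max}$ and $x$. Each of the four displayed exponent triples corresponds to a different optimum regime where a distinct exponent pair or Van der Corput iteration is the active constraint. Using $D_{\max} \leqslant D$ at the end, I would verify that the first term dominates in the intended parameter range, yielding the clean bound $\Delta \ll D^{527/1038} x^{511/1038 + \varepsilon}$. The main obstacle is ensuring that Bourgain's improvement---which is only a single-stage gain in one exponent pair---survives the Cauchy--Schwarz, Weyl differencing and Poisson summation steps without being lost to slack estimates elsewhere. A secondary difficulty is that changing one exponent pair shifts every transition point in Nowak's multi-dimensional parameter balance, so each of the four candidate terms must be rechecked to confirm it is genuinely attained at the new optimum rather than dominated by some other combination.
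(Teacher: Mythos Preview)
Your high-level idea---feed Bourgain's bound into Nowak's machinery and rebalance---is right, but the execution you describe diverges substantially from the paper and is both more complicated and less likely to reproduce the stated exponents.

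The paper does \emph{not} work on the critical line via Perron and approximate functional equations. Instead it quotes Friedlander--Iwaniec's Proposition~3.2 directly, which already gives
\[
\Delta \ll \Bigl|D^{1/6}x^{1/3}\sum_{n_1n_2n_3\le N}\frac{\bar\chi_1(n_1)\bar\chi_2(n_2)\bar\chi_3(n_3)}{(n_1n_2n_3)^{2/3}}\,e\Bigl(\pm3\bigl(\tfrac{n_1n_2n_3x}{D}\bigr)^{1/3}\Bigr)\Bigr|+x^{\varepsilon}\Bigl(\tfrac{Dx^2}{N}\Bigr)^{1/3},
\]
with a cube-root phase and no $t$-integration at all. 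After a dyadic split with $N_1\le N_2\le N_3$, the variables $n_1,n_2$ are estimated \emph{trivially} (not by Cauchy--Schwarz or the large sieve), and $\bar\chi_3$ is removed by expanding it in Gauss sums, costing only a factor $D_{\max}^{1/2}$. What remains is a single-variable sum over $n_3$ with the same cube-root phase; to this the paper applies its Lemma~2.2, the fifth-derivative test obtained by running Nowak's recursion (Lemma~2.1) one step from a fourth-derivative seed that already incorporates Bourgain's bounds $M^{1/2}T^{13/84+\varepsilon}$ and $M^{1/2}T^{32/205+\varepsilon}$. The four terms in the theorem are precisely the four terms of that lemma, and the single free parameter $N$ is optimized at the end to $N=D^{55/173}D_{\max}^{-291/346}x^{181/346}$.

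So the Bourgain input enters \emph{before} the triple-sum analysis, baked into a one-variable higher-derivative test, not as an exponent pair plugged in after Huxley's method is applied to the triple sum. Your plan to handle $\chi_1,\chi_2$ by Cauchy--Schwarz and the large sieve would introduce losses that the paper's argument simply does not incur, and the logarithmic-phase sums coming from Perron would in any case need a Voronoi/functional-equation step to reach the cube-root phase on which the derivative test acts. If you want to land on the specific exponents $\tfrac{511}{1038}$, $\tfrac{675}{1384}$, etc., you should follow the paper's much more direct route: Friedlander--Iwaniec's starting estimate, trivial summation in two variables, Gauss-sum completion in the third, then the improved fifth-derivative test.
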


Then by the similar arguments as in \cite{FI2004}, we can get the following result on primes in short intervals:
\begin{theorem}\label{0.4923}
Let $\chi=\chi_{D}$ denotes the real primitive character of conductor D, $x \geqslant D^{r}$ with $r=433433$ and $0.4923 \leqslant \alpha \leqslant 1$. Then we have
$$
\psi(x)-\psi(x-x^{\alpha})=x^{\alpha} \left\{1+O\left(L(1, \chi)(\log x)^{r^{r}}\right)\right\}.
$$
and
$$
\pi(x)-\pi(x-x^{\alpha})=\frac{x^{\alpha}}{\log x}\left\{1+O\left(L(1, \chi)(\log x)^{r^{r}}\right)\right\}.
$$
\end{theorem}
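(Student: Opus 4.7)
The plan is to follow the argument of Friedlander and Iwaniec \cite{FI2004} in its entirety, with only one substantive change: at the step where their proof invokes Theorem~\ref{FI-L}, I would instead invoke our improved Theorem~\ref{New-L}. The effect is to replace the triple-sum $x$-exponent $37/75 \approx 0.49333$ by $511/1038 \approx 0.49229$, which is the numerical source of the improvement from $0.4937$ to $0.4923$ in the admissible range of $\alpha$.

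First I would reproduce the setup of \cite{FI2004}: using the existence of a real primitive character $\chi$ mod $D$ with $L(1,\chi)$ small, one decomposes $\psi(x)-\psi(x-x^{\alpha})$ via a Heath--Brown--type identity for $\Lambda(n)$. The resulting sums split into type~I, type~II and higher-type pieces supported on dyadic boxes; the exceptional character furnishes a multiplicative substitute $\lambda = 1 * \chi$ for the M\"obius function that permits every type~I and type~II contribution to be bounded with a saving of $L(1,\chi)(\log x)^{A}$. The only sums that resist this treatment are those of genuine type~III shape, which after Mellin inversion and partial summation reduce to incomplete sums of the form
\[
\sum_{n_1 n_2 n_3 \leqslant y}\chi_1(n_1)\chi_2(n_2)\chi_3(n_3), \qquad y \leqslant x,
\]
with $\chi_j$ induced by $\chi$ and $D_1 D_2 D_3 \leqslant D^{3}$.

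At this juncture I would substitute Theorem~\ref{New-L} for Theorem~\ref{FI-L} and re-run the parameter optimization of \cite{FI2004}. The optimal $\alpha$ is governed by the balance between the $x$-exponent in the triple-sum bound and the dyadic lengths in the type~I/type~II dissection; Theorem~\ref{New-L} yields, in the limiting case, the requirement $\alpha > 511/1038$, which after accounting for the power of $D$ and the ancillary logarithmic factors collapses numerically to $\alpha \geqslant 0.4923$. The passage from the $\psi$-statement to the $\pi$-statement is then a routine partial summation, identical to the one in \cite{FI2004}.

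The main obstacle is not conceptual but arithmetic bookkeeping: one must track the power of $D$ that appears in each composite estimate as it propagates through the optimization. Since Theorem~\ref{New-L} features the slightly larger $D$-exponent $527/1038$ in place of $38/75$ used by Friedlander--Iwaniec, the admissible ratio $\log x/\log D$ must be enlarged to absorb this loss, and this is exactly what forces the jump in the constant from $r=18289$ to $r=433433$ in the hypothesis $x \geqslant D^{r}$. The $(\log x)^{r^{r}}$ factor is an artifact of the Friedlander--Iwaniec averaging and need not be reoptimized. Apart from this numerical work, every intermediate estimate is inherited verbatim from \cite{FI2004}.
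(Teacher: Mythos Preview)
Your high-level plan---run the argument of \cite{FI2004} verbatim, substituting Theorem~\ref{New-L} for Theorem~\ref{FI-L} and redoing the final arithmetic---is exactly what the paper does. However, your description of what the Friedlander--Iwaniec argument actually \emph{is} is substantially off, and if you tried to implement the proposal as written you would be building a different machine.

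The method of \cite{FI2004} is not a Heath--Brown combinatorial identity with a type~I/II/III dissection, nor does Mellin inversion enter. Instead one uses the exact identity $\Lambda = \lambda' * \nu$, where $\lambda'(d)=\sum_{kl=d}\chi(k)\log l$ and $\nu(m)=\sum_{ad=m}\mu(a)\mu(d)\chi(d)$, and splits $\Lambda=\Lambda^{*}+\Lambda_{*}$ according to whether the $\nu$-variable $m$ is $\leqslant D^{2}$ or $>D^{2}$. The piece $\psi^{*}$ is handled by an elementary lemma of \cite{FI2004} (Lemma~\ref{l42} here) that needs no exponential-sum input at all. The piece $\psi_{*}$ is controlled pointwise by a weighted sum of $\mathcal{D}(x;\rho_{*})$ over dilations, where $\rho=1*1*\chi$; it is in the asymptotic for $\mathcal{D}(x;\rho)$---a genuine three-fold divisor sum with characters $(\chi_{1},\chi_{2},\chi_{3})=(1,1,\chi)$, hence $D_{1}D_{2}D_{3}=D$, not $D^{3}$---that Theorem~\ref{New-L} replaces Theorem~\ref{FI-L}. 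The exponent $511/1038$ in the error of $\mathcal{D}(x;\rho)$ then drives the constraint $y>D^{2}x^{0.492293\ldots}$, and the averaging over dilations $\delta\leqslant x^{1/r}$ with $A=r\log r/\log 2$ produces the $(\log x)^{r^{r}}$ factor; solving $\tfrac{5}{2r}+0.492293+\tfrac{0.507707}{r}\leqslant 0.4923$ gives $r=433433$.

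So your endpoint and your one-line summary (``swap in the new triple-sum bound and recompute'') are right, but the intermediate narrative about Heath--Brown identities, type~III sums, and Mellin inversion does not match either \cite{FI2004} or the present paper and should be replaced by the $\lambda'*\nu$ decomposition above.
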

\begin{remark*}
We have $0.4936<\frac{39}{79}<0.4937$ and $0.4924<\frac{2500}{5077}<0.4925$.
\end{remark*}

Compare our Theorem~\ref{0.4923} with Theorem~\ref{FI0.4937}, we can find that our result is non-trivial if
\begin{equation}
L(1, \chi) \ll (\log x)^{-433433^{433433}-1}
\end{equation}
or
\begin{equation}
L(1, \chi) \ll (\log D)^{-433433^{433433}-1}
\end{equation}
if $D$ is a positive power of $x$. Unfortunately, Zhang \cite{Zhang} posited that there is no $L$-function with 
\begin{equation}
L(1, \chi) \ll (\log D)^{-2022}.
\end{equation}
\begin{corollary}
If the condition (3) holds, then there is always a prime number in the interval $(x-x^{0.4923}, x]$.
\end{corollary}

\section{Higher order derivative tests for single exponential sums}

\begin{lemma}\label{higher}
For $r \geqslant 4$ a fixed integer, and positive real parameters $M \geqslant 1$ and $T$, suppose that $F$ is a real function on some compact interval $I^{*}$ of length $M$, with $r+1$ continuous derivatives satisfying throughout
$$
F^{(j)} \asymp TM^{-j} \quad \operatorname{for} \quad j=r-2, r-1, r .
$$
Then, for every interval $I \subseteq I^{*}$,
$$
E := \sum_{m \in I} \mathrm{e}^{2 \pi \mathrm{i} F(m)} \ll M^{a_{r}} T^{b_{r}}+M^{\xi_{r}} T^{\eta_{r}}+M^{\alpha_{r}}+M^{\gamma_{r}} T^{-\delta_{r}}
$$
where
$a_{4}=\frac{1}{2}, b_{4}=\frac{13}{84}+\varepsilon, \xi_{4}=0, \eta_{4}=\frac{31}{84}+\varepsilon, \alpha_{4}=\frac{334}{411}, \gamma_{4}=1, \delta_{4}=\frac{1}{2}$,
and for every $j>4$,
$$
\begin{gathered}
a_{j}=\frac{a_{j-1}+b_{j-1}+1}{2\left(b_{j-1}+1\right)}, \quad b_{j}=\frac{b_{j-1}}{2\left(b_{j-1}+1\right)}, \\
\xi_{j}=\frac{\left(\xi_{j-1}+1\right)\left(b_{j-1}+1\right)-a_{j-1} \eta_{j-1}}{2\left(b_{j-1}+1\right)}, \quad \eta_{j}=\frac{\eta_{j-1}}{2\left(b_{j-1}+1\right)}, \quad \alpha_{j}=\frac{\alpha_{j-1}+1}{2}, \\
\gamma_{j}=\frac{a_{j-1} \delta_{j-1}+\left(b_{j-1}+1\right)\left(\gamma_{j-1}+1\right)}{2\left(b_{j-1}+1\right)}, \quad \delta_{j}=\frac{\delta_{j-1}}{2\left(b_{j-1}+1\right)}.
\end{gathered}
$$
\end{lemma}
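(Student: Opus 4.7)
The plan is to prove the lemma by induction on $r$. For the base case $r=4$, I would combine Bourgain's recent exponent-pair bound \cite{Bourgain} with the classical third/fourth derivative tests and Huxley's discrete Hardy--Littlewood method from \cite{HuxleyV}. The four terms in the base-case bound correspond respectively to (i) Bourgain's exponent pair $(\tfrac{13}{84}+\varepsilon,\tfrac{55}{84}+\varepsilon)$ processed through a van der Corput A-step, producing $M^{1/2}T^{13/84+\varepsilon}$; (ii) an alternative arrangement of the same input producing $T^{31/84+\varepsilon}$; (iii) Huxley's spacing/large-sieve bound, contributing $M^{334/411}$; and (iv) the elementary second derivative test, contributing $MT^{-1/2}$. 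These pieces are exactly the building blocks used in the fourth derivative test of \cite{Nowak2012}, with Bourgain's input replacing the older Huxley--Kolesnik exponent pair.

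For the inductive step $j-1 \to j$, I would apply van der Corput's A-process (Weyl differencing). For any integer $H$ with $1 \leq H \leq M$, the standard inequality
\[
|E|^2 \ll \frac{M^2}{H} + \frac{M}{H} \sum_{h=1}^{H} \biggl|\sum_{m} e\bigl(F(m+h)-F(m)\bigr)\biggr|
\]
holds for the sum over $I$. Set $G_h(m) := F(m+h)-F(m)$. By the mean value theorem, $G_h^{(k)}(m) \asymp h F^{(k+1)}(m) \asymp h T M^{-(k+1)}$ for $k = j-3, j-2, j-1$, so $G_h$ satisfies the hypothesis of the lemma at order $j-1$ with effective parameters $M' = M$ and $T' = hT/M$. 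The inductive hypothesis therefore yields
\[
\sum_m e(G_h(m)) \ll M^{a_{j-1}}(hT/M)^{b_{j-1}} + M^{\xi_{j-1}}(hT/M)^{\eta_{j-1}} + M^{\alpha_{j-1}} + M^{\gamma_{j-1}}(hT/M)^{-\delta_{j-1}}.
\]

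Summing in $h$ up to $H$ via $\sum_{h \leq H} h^{\theta} \asymp H^{\theta+1}$ (for $\theta > -1$) and inserting into the Weyl inequality, each of the four pieces above produces a corresponding contribution to $|E|^2$. Choosing $H$ to balance the first (dominant) contribution against the diagonal $M^2/H$, namely
\[
H \asymp M^{(b_{j-1}+1-a_{j-1})/(b_{j-1}+1)}\, T^{-b_{j-1}/(b_{j-1}+1)},
\]
and propagating this choice through the other three contributions, yields after extracting square roots precisely the four recurrences $(a_j,b_j)$, $(\xi_j,\eta_j)$, $\alpha_j$, $(\gamma_j,\delta_j)$ as stated. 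The apparent minus sign in the formula for $\xi_j$ is the artifact of evaluating the secondary term $M^{\xi_{j-1}}T^{\eta_{j-1}}$ at the $H$ optimized for the primary term, rather than at its own optimum.

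The hard part will be verifying the base case $r=4$: this requires a careful combination of Bourgain's deep decoupling input with Huxley's discrete Hardy--Littlewood method, and is where all the specific rational constants $\tfrac{13}{84}, \tfrac{31}{84}, \tfrac{334}{411}$ enter. The inductive machinery is by comparison a routine but notationally heavy calculation in van der Corput theory, directly parallel to the derivation of Nowak's earlier higher-order tests in \cite{Nowak2012}. One bookkeeping point is to verify that the optimal $H$ lies in the admissible range $[1,M]$; if it does not, one substitutes the boundary value and checks that the resulting bound is absorbed into one of the other three terms of the stated inequality.
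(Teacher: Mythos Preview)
Your approach is essentially the paper's: the induction step via the van der Corput $A$-process (Weyl differencing) is exactly the mechanism behind the recurrences, and your balancing choice of $H$ reproduces all four recursions correctly, matching the argument of \cite{Nowak2012} that the paper invokes verbatim. One minor correction on the base case: the term $M^{1/2}T^{13/84+\varepsilon}$ is obtained by applying Bourgain's exponent pair $(\tfrac{13}{84}+\varepsilon,\tfrac{55}{84}+\varepsilon)$ \emph{directly} (since $l-k=\tfrac12$), not after an $A$-step; and the paper obtains $M^{334/411}$ not from a separate spacing argument but by using Huxley's bound $E\ll M^{1/2}T^{32/205+\varepsilon}$ in the range $T^{1/2}\ll M$ and substituting $T\ll M^2$ to get $M^{1/2+64/205+\varepsilon}=M^{333/410+\varepsilon}\ll M^{334/411}$.
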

\begin{proof}
The proof is very similar to that of [\cite{Nowak2012}, Theorem 1]. The only difference is that we use the bound $E \ll M^{\frac{1}{2}}T^{\frac{13}{84}+\varepsilon}$ in the range $T^{\frac{17}{42}+\varepsilon} \ll M \ll T^{\frac{1}{2}}$ and the bound $E \ll M^{\frac{1}{2}}T^{\frac{32}{205}+\varepsilon} \ll M^{\frac{333}{410}+\varepsilon} \ll M^{\frac{334}{411}}$ in the range $T^{\frac{1}{2}} \ll M \ll T^{\frac{181}{328}+\varepsilon}$. We remark that Bourgain and Huxley considered the exponential sums defined on different summation ranges of $m$. For this, we just need to set a function $g$ by $g(m)=f(2m)$ or $g(m/2M)=F(m/M)$ in the range $T^{\frac{17}{42}+\varepsilon} \ll M \ll T^{\frac{1}{2}}$, where the function $f$ is defined by $F(m)=Tf(m/M)$ as the same as in Nowak's papers.
\end{proof}

\begin{lemma}\label{l2}
For positive real parameters $M \geqslant 1$ and $T$, suppose that $F$ is a real function on some compact interval $I^{*}$ of length $M$, with 6 continuous derivatives satisfying throughout
$$
F^{(j)} \asymp TM^{-j} \quad \operatorname{for} \quad j=3,4,5 .
$$
Then, for every interval $I \subseteq I^{*}$,
$$
E \ll M^{\frac{139}{194}} T^{\frac{13}{194}+\varepsilon}+M^{\frac{163}{388}} T^{\frac{31}{194}+\varepsilon}+M^{\frac{745}{822}}+M^{\frac{215}{194}} T^{-\frac{21}{97}}.
$$
\end{lemma}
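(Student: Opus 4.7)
The plan is to deduce Lemma~\ref{l2} as the single case $r=5$ of Lemma~\ref{higher}. The hypotheses line up exactly: taking $r=5$, Lemma~\ref{higher} requires $r+1=6$ continuous derivatives of $F$ together with $F^{(j)}\asymp TM^{-j}$ for $j=r-2,r-1,r=3,4,5$, which is precisely what Lemma~\ref{l2} assumes.

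Thus the entire task reduces to carrying out one step of each of the seven recursions, starting from the base values $a_4=\tfrac12$, $b_4=\tfrac{13}{84}+\ve$, $\xi_4=0$, $\eta_4=\tfrac{31}{84}+\ve$, $\alpha_4=\tfrac{334}{411}$, $\gamma_4=1$, $\delta_4=\tfrac12$. Since $b_4+1=\tfrac{97}{84}$ and hence $2(b_4+1)=\tfrac{194}{84}$, the updates for $a_j,b_j,\eta_j,\alpha_j,\delta_j$ become immediate one-line substitutions. The two slightly longer computations are $\xi_5$, where one first forms $(\xi_4+1)(b_4+1)-a_4\eta_4$, and $\gamma_5$, where one first forms $a_4\delta_4+(b_4+1)(\gamma_4+1)$; both reduce to simple fractions after passing to a common denominator of $168$ and $84$ respectively.

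A quick consistency check then confirms that the resulting tuple
\[
(a_5,b_5,\xi_5,\eta_5,\alpha_5,\gamma_5,\delta_5)=\left(\tfrac{139}{194},\,\tfrac{13}{194},\,\tfrac{163}{388},\,\tfrac{31}{194},\,\tfrac{745}{822},\,\tfrac{215}{194},\,\tfrac{21}{97}\right)
\]
matches exactly the exponents printed in the statement, with the understanding that the $\ve$ attached to $b_4$ and $\eta_4$ propagates through the recursions and gets absorbed into a new $\ve$ in the final bounds on the $T$-powers.

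The only obstacle I foresee is purely clerical: carefully tracking the $\ve$'s and the fractions through the recursions. Conceptually, Lemma~\ref{l2} is not a new result but the convenient specialization of Lemma~\ref{higher} to $r=5$, apparently earmarked for use in bounding the three-dimensional exponential sums that enter the proof of Theorem~\ref{New-L}.
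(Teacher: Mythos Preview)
Your proposal is correct and matches the paper's own proof, which simply states that Lemma~\ref{l2} is the case $r=5$ of Lemma~\ref{higher}. Your explicit verification of the recursion values $(a_5,b_5,\xi_5,\eta_5,\alpha_5,\gamma_5,\delta_5)$ is accurate and adds detail the paper omits.
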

\begin{proof}
This is the case $r=5$ of our Lemma~\ref{higher}.
\end{proof}

\section{A problem considered by Friedlander, Iwaniec and Nowak}
We follow in all essentials the argument of \cite{Nowak1} and using our Lemma~\ref{l2}. We write $e(w):=\mathrm{e}^{2 \pi \mathrm{i} w}$ and start from the estimate
\begin{equation}
\Delta \ll\left|D^{\frac{1}{6}} x^{\frac{1}{3}} \sum_{n_{1} n_{2} n_{3} \leqslant N} \frac{\bar{\chi}_{1}\left(n_{1}\right) \bar{\chi}_{2}\left(n_{2}\right) \bar{\chi}_{3}\left(n_{3}\right)}{\left(n_{1} n_{2} n_{3}\right)^{\frac{2}{3}}} e\left( \pm 3\left(\frac{n_{1} n_{2} n_{3}x}{D}\right)^{\frac{1}{3}}\right)\right|+x^{\varepsilon}\left(\frac{D x^{2}}{N}\right)^{\frac{1}{3}}
\end{equation}
where the sign $\pm$ is chosen so that the modulus involved becomes maximal. This result is immediate from [\cite{FI2005}, Proposition 3.2]. The exponential sum here can be split up into $O\left(\log ^{3}x\right)$ subsums
$$
S\left(N_{1}, N_{2}, N_{3}\right):=\sum_{\substack{n_{j} \sim N_{j} \\ j=1,2,3}} \frac{\bar{\chi}_{1}\left(n_{1}\right) \bar{\chi}_{2}\left(n_{2}\right) \bar{\chi}_{3}\left(n_{3}\right)}{\left(n_{1} n_{2} n_{3}\right)^{\frac{2}{3}}}  e\left( \pm 3\left(\frac{n_{1} n_{2} n_{3}x}{D}\right)^{\frac{1}{3}}\right)
$$
with $N_{1} N_{2} N_{3} \leqslant N$ throughout. Without loss of generality we assume that $N_{1} \leqslant N_{2} \leqslant N_{3}$, and put $P=N_{1} N_{2} N_{3}$. Then obviously
\begin{equation}
S\left(N_{1}, N_{2}, N_{3}\right) \ll P^{-\frac{2}{3}} \sum_{\substack{n_{j} \sim N_{j} \\ j=1,2}}\left|\sum_{n_{3} \sim N_{3}} \bar{\chi}_{3}\left(n_{3}\right) e\left( \pm 3\left(\frac{n_{1} n_{2} n_{3}x}{D}\right)^{\frac{1}{3}}\right)\right|.
\end{equation}
Here integration by parts has been used with respect to $n_{3}$, and trivial estimation with respect to $n_{1}, n_{2}$. For each fixed pair $\left(n_{1}, n_{2}\right)$, the range for $n_{3}$ is chosen in such a way that the absolute value on the right hand side becomes maximal.

The next step is to express the character $\bar{\chi}_{3}$ by means of the Gauss sums $G\left(m, \bar{\chi}_{3}\right)$:
$$
\bar{\chi}_{3}\left(n_{3}\right)=\frac{1}{D_{3}} \sum_{m=1}^{D_{3}} G\left(m, \bar{\chi}_{3}\right) e\left(-\frac{m n_{3}}{D_{3}}\right), \quad G\left(m, \bar{\chi}_{3}\right):=\sum_{k=1}^{D_{3}} \bar{\chi}_{3}(k) e\left(\frac{k m}{D_{3}}\right)
$$
Since $\left|G\left(m, \bar{\chi}_{3}\right)\right| \leqslant D_{3}^{\frac{1}{2}} \leqslant D_{\max }^{\frac{1}{2}}$, it follows that for each fixed $\left(n_{1}, n_{2}\right)$, with $n_{1} \sim N_{1}$, $n_{2} \sim N_{2}$,
$$
\sum_{n_{3} \sim N_{3}} \bar{\chi}_{3}\left(n_{3}\right) e\left( \pm 3\left(\frac{n_{1} n_{2} n_{3}x}{D}\right)^{\frac{1}{3}}\right) \ll D_{\max }^{\frac{1}{2}} \max _{m=1, \ldots, D_{3}}\left|\sum_{n_{3} \sim N_{3}} e\left(3\left(\frac{n_{1} n_{2} n_{3}x}{D}\right)^{\frac{1}{3}} \mp \frac{m n_{3}}{D_{3}}\right)\right| .
$$
The last exponential sums will now be bounded by Lemma~\ref{l2}. The conditions of Lemma~\ref{l2} are satisfied with
$$
M=N_{3}, \quad T=\left(\frac{P x}{D}\right)^{\frac{1}{3}}.
$$

Therefore, by Lemma~\ref{l2} we have
\begin{align}
\nonumber & \sum_{n_{3} \sim N_{3}} \bar{\chi}_{3}\left(n_{3}\right) e\left(3\left(\frac{n_{1} n_{2} n_{3}x}{D}\right)^{\frac{1}{3}}\right) \\
\ll & D_{\max }^{\frac{1}{2}} \left(N_{3}^{\frac{139}{194}}\left(\left(\frac{P x}{D}\right)^{\frac{1}{3}}\right)^{\frac{13}{194}+\varepsilon}+N_{3}^{\frac{163}{388}}\left(\left(\frac{P x}{D}\right)^{\frac{1}{3}}\right)^{\frac{31}{194}+\varepsilon} +N_{3}^{\frac{745}{822}}+N_{3}^{\frac{215}{194}}\left(\left(\frac{P x}{D}\right)^{\frac{1}{3}}\right)^{-\frac{21}{97}}\right).
\end{align}
By (7)--(8) we get that
\begin{align}
\nonumber & D^{\frac{1}{6}} x^{\frac{1}{3}} S\left(N_{1}, N_{2}, N_{3}\right) \\
\nonumber \ll& D^{\frac{1}{6}} x^{\frac{1}{3}} P^{\frac{1}{3}} D_{\max }^{\frac{1}{2}} \left(D^{-\frac{13}{582}} N_{3}^{-\frac{55}{194}} P^{\frac{13}{582}} x^{\frac{13}{582}}+D^{-\frac{31}{582}} N_{3}^{-\frac{225}{388}} P^{\frac{31}{582}} x^{\frac{31}{582}}+N_{3}^{-\frac{77}{822}}+D^{\frac{7}{97}} N_{3}^{\frac{21}{194}} P^{-\frac{7}{97}} x^{-\frac{7}{97}}\right) \\
\ll& D_{\max }^{\frac{1}{2}} \left(D^{\frac{14}{97}} N_{3}^{-\frac{55}{194}} P^{\frac{69}{194}} x^{\frac{69}{194}}+D^{\frac{11}{97}} N_{3}^{-\frac{225}{388}} P^{\frac{75}{194}} x^{\frac{75}{194}}+D^{\frac{1}{6}} N_{3}^{-\frac{77}{822}} P^{\frac{1}{3}} x^{\frac{1}{3}}+D^{\frac{139}{582}} N_{3}^{\frac{21}{194}} P^{\frac{76}{291}} x^{\frac{76}{291}}\right).
\end{align}
By the trivial facts $P^{\frac{1}{3}} \leqslant N_{3} \leqslant N$ to get rid of $N_{3}$ and $P \ll N$, we have
\begin{align}
\Delta \ll& \left(\frac{D x^{2}}{N}\right)^{\frac{1}{3}}+D_{\max }^{\frac{1}{2}}\left(D^{\frac{14}{97}} N^{\frac{76}{291}} x^{\frac{69}{194}} +D^{\frac{11}{97}} N^{\frac{75}{388}} x^{\frac{75}{194}}+D^{\frac{1}{6}} N^{\frac{745}{2466}} x^{\frac{1}{3}} +D^{\frac{139}{582}} N^{\frac{215}{582}} x^{\frac{76}{291}}\right).
\end{align}
Finally, by choosing
$$
N=D^{\frac{55}{173}} D_{\max }^{-\frac{291}{346}} x^{\frac{181}{346}},
$$
we obtain the bound
\begin{align}
\nonumber \Delta \ll & x^{\varepsilon} \left(D^{\frac{118}{519}} D_{\max }^{\frac{97}{346}} x^{\frac{511}{1038}} +D^{\frac{121}{692}} D_{\max }^{\frac{467}{1384}} x^{\frac{675}{1384}} +D^{\frac{56039}{213309}} D_{\max }^{\frac{69941}{284412}} x^{\frac{419257}{853236}} +D^{\frac{17936}{50343}} D_{\max }^{\frac{131}{692}} x^{\frac{91507}{201372}}\right) \\
\ll & D^{\frac{527}{1038}} x^{\frac{511}{1038}+\varepsilon},
\end{align}
which is our Theorem~\ref{New-L}, improves Nowak's Theorem~\ref{Nowak-L} (we have $0.4924 < \frac{2498}{5073} < 0.4925$ and $0.4922 < \frac{511}{1038} < 0.4923$). 

\section{A new application: Primes in Short Intervals}

In his articles \cite{Nowak2012}--\cite{Nowak2}, Nowak considered several problems concerning products of $L$-series and pointed that his method is more powerful in the case of exponential sums depending on several parameters. For exponential sums depending on one parameter only, multiple sum estimations are often more powerful. (see [\cite{Nowak2}, Concluding Remark]). Clearly our new estimation can improve these results. Now we follow Friedlander and Iwaniec directly and show that our Theorem~\ref{New-L} can be used to extend the range of $\alpha$.
We first introduce some arithmetic functions:
$$
\lambda(n):=\sum_{a d=n} \chi(d), \quad \nu(m):=\sum_{a d=m} \mu(a) \mu(d) \chi(d), \quad \rho(n):=\sum_{l m=n} \lambda(m),
$$
and a close relative of $\lambda(d)$, namely
$$
\lambda^{\prime}(d):=\sum_{k l=d} \chi(k) \log l
$$
which has the following properties:
$$
\lambda^{\prime}(d) = \sum_{b c=d} \lambda(b) \Lambda(c), \quad \Lambda(n) = \sum_{d m=n} \lambda^{\prime}(d) \nu(m), \quad 0 \leqslant \lambda^{\prime}(d) \leqslant \tau(d) \log d .
$$

In the classical divisor problem, one need to get an asymptotic formula for the functions:
$$
\mathcal{D}\left(x ; \lambda\right) :=\sum_{n \leqslant x} \lambda(n), \quad \mathcal{D}\left(x ; \lambda^{\prime}\right) :=\sum_{n \leqslant x} \lambda^{\prime}(n)
$$
and we also need the asymptotic formula for
$$
\mathcal{D}\left(x ; \rho\right):=\sum_{n \leqslant x} \rho(n).
$$
\begin{lemma}\label{l41}
For $x \geqslant 1$ we have
\begin{align}
\mathcal{D}\left(x ; \lambda\right)=& L(1, \chi) x+O\left(D^{\frac{1}{3}} x^{\frac{1}{3}+\varepsilon}\right), \\
\mathcal{D}\left(x ; \lambda^{\prime}\right)=& L(1, \chi) x \log x+\left(L^{\prime}(1, \chi)-L(1, \chi)\right) x +O\left(D^{\frac{1}{3}} x^{\frac{1}{3}+\varepsilon}\right), \\
\mathcal{D}\left(x ; \rho\right)=& L(1, \chi) x \log x+\left(L^{\prime}(1, \chi)+(2 \gamma-1) L(1, \chi)\right) x +O\left(D^{\frac{527}{1038}} x^{\frac{511}{1038}+\varepsilon}\right),
\end{align}
where $\gamma$ is the Euler constant and the implied constant depends only on $\varepsilon$.
\end{lemma}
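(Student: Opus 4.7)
All three estimates share a common architecture. Each arithmetic function is a Dirichlet convolution of $\chi$ with $\mathbf{1}$'s (and in (10) additionally with $\log$), so its Dirichlet series is a product involving $L(s,\chi)$, $\zeta(s)$, and possibly $-\zeta'(s)$. The plan is therefore (a) to identify the main term as the residue at $s=1$ of this series against $x^{s}/s$ by a Perron-type contour integral or direct pole analysis, and (b) to control the remainder via a hyperbola method combined with the sharpest available exponential sum bounds; (11) is precisely where our Theorem~\ref{New-L} contributes new strength.

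For (9), $\lambda=\mathbf{1}\star\chi$ has Dirichlet series $\zeta(s)L(s,\chi)$ with a simple pole at $s=1$ of residue $L(1,\chi)$, yielding the main term $L(1,\chi)x$. The error $O(D^{1/3}x^{1/3+\varepsilon})$ is classical: split by the Dirichlet hyperbola method, apply Voronoi summation (i.e.\ the functional equation for $L(s,\chi)$) to the long inner variable, and conclude with van der Corput second-derivative bounds on the resulting exponential sum. For (10), $\lambda'=\chi\star\log$ has Dirichlet series $-L(s,\chi)\zeta'(s)$; since $-\zeta'(s)=(s-1)^{-2}+h(s)$ with $h$ regular at $s=1$, setting $f(s):=L(s,\chi)x^{s}/s$ one obtains
\[
\operatorname{Res}_{s=1}\!\bigl(f(s)\cdot(-\zeta'(s))\bigr)=f'(1)=L(1,\chi)x\log x+\bigl(L'(1,\chi)-L(1,\chi)\bigr)x,
\]
which is exactly the stated main term; the error $O(D^{1/3}x^{1/3+\varepsilon})$ follows from the same hyperbola/Voronoi/van der Corput scheme, the $\log l$ factor being absorbed by partial summation.

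For (11), the key observation is $\rho=\mathbf{1}\star\mathbf{1}\star\chi$, so
\[
\mathcal{D}(x;\rho)=\sum_{n_{1}n_{2}n_{3}\le x}\chi(n_{3}),
\]
which is exactly the sum estimated in Theorem~\ref{New-L} with $\chi_{1},\chi_{2}$ taken as the trivial character of conductor $1$ and $\chi_{3}=\chi$; hence $D_{1}=D_{2}=1$, $D_{3}=D_{\max}=D$, and $D_{1}D_{2}D_{3}=D$. The associated Dirichlet series is $\zeta(s)^{2}L(s,\chi)$. Using $\zeta(s)^{2}=(s-1)^{-2}+2\gamma(s-1)^{-1}+O(1)$ together with $f(1)=L(1,\chi)x$ and $f'(1)=L'(1,\chi)x+L(1,\chi)x\log x-L(1,\chi)x$, the residue at $s=1$ of $\zeta(s)^{2}L(s,\chi)x^{s}/s$ is
\[
f'(1)+2\gamma f(1)=L(1,\chi)x\log x+\bigl(L'(1,\chi)+(2\gamma-1)L(1,\chi)\bigr)x,
\]
matching the stated main term, while the error $O\bigl(D^{527/1038}x^{511/1038+\varepsilon}\bigr)$ is immediate from Theorem~\ref{New-L}.

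The genuine difficulty is confined to (11) and has already been dispatched in Section 3 via the improved exponential sum estimate of Theorem~\ref{New-L}. Parts (9) and (10) are routine applications of established techniques and pose no new obstacle; the remaining work here is essentially bookkeeping of main terms through the residue calculations above.
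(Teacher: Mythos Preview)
Your proposal is correct and follows essentially the same approach as the paper: the paper's proof simply refers to the arguments of Friedlander--Iwaniec \cite{FI2005} and notes that the only change is to invoke Theorem~\ref{New-L} in place of Theorem~\ref{FI-L} for the third formula, which is precisely what you do by recognizing $\mathcal{D}(x;\rho)=\sum_{n_1n_2n_3\le x}\chi(n_3)$ as the special case $\chi_1=\chi_2=\mathbf{1}$, $\chi_3=\chi$ of $\Delta$. Your explicit residue computations and the sketch for the first two formulas supply helpful detail but do not deviate from the intended argument.
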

\begin{proof}
These can be proved by similar arguments as in \cite{FI2005}, and the only difference is that we use our Theorem~\ref{New-L} instead of Theorem~\ref{FI-L} in order to prove the formula (14).
\end{proof}

Now we split the functions $\rho=\rho^{*}+\rho_{*}$, $\Lambda=\Lambda^{*}+\Lambda_{*}$ and further define
$$
\rho^{*}(n):=\sum_{\substack{l m=n \\ m \leqslant D^{2}}} \lambda(m), \quad \rho_{*}(n):=\sum_{\substack{l m=n \\ m > D^{2}}} \lambda(m),
$$
$$
\Lambda^{*}(n):=\sum_{\substack{d m=n \\ m \leqslant D^{2}}} \lambda^{\prime}(d) \nu(m), \quad \Lambda_{*}(n):=\sum_{\substack{d m=n \\ m > D^{2}}} \lambda^{\prime}(d) \nu(m),
$$
$$
\psi^{*}(x):=\sum_{n \leqslant x} \Lambda^{*}(n), \quad \psi_{*}(x):=\sum_{n \leqslant x} \Lambda_{*}(n).
$$
\begin{lemma}\label{l42}
[\cite{FI2004}, Corollary 4.2]. For $D^{\frac{5}{2}} x^{\frac{3}{8}}<y \leqslant x$ we have
\begin{equation}
\psi^{*}(x)-\psi^{*}(x-y)=y+O\left(L(1, \chi) y(\log x)^{9}\right),
\end{equation}
where the implied constant is absolute.
\end{lemma}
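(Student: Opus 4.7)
The lemma is quoted as [\cite{FI2004}, Corollary 4.2], so the plan is to reconstruct the Friedlander--Iwaniec argument. I would start from the definition of $\Lambda^{*}$ and swap the order of summation:
$$\psi^{*}(x) - \psi^{*}(x-y) = \sum_{m \leqslant D^{2}} \nu(m) \bigl[\mathcal{D}(x/m;\lambda') - \mathcal{D}((x-y)/m;\lambda')\bigr].$$
Substituting the asymptotic (13) of Lemma~\ref{l41} for each $\mathcal{D}(\cdot;\lambda')$ and Taylor-expanding $(x/m)\log(x/m) - ((x-y)/m)\log((x-y)/m)$ in the short-interval regime $y\leqslant x$, the main-term contribution reduces to
$$y\,S_{1}\bigl[L(1,\chi)\log x + L'(1,\chi)\bigr] - y\,L(1,\chi)\,S_{2} + O\Bigl(L(1,\chi)\tfrac{y^{2}}{x}(\log x)^{2}\Bigr),$$
where $S_{1} := \sum_{m\leqslant D^{2}} \nu(m)/m$ and $S_{2} := \sum_{m\leqslant D^{2}} \nu(m)\log m/m$.

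The second step is to evaluate $S_{1}$ and $S_{2}$ via the Dirichlet series identity $\sum_{m\geqslant 1} \nu(m)/m^{s} = 1/(\zeta(s) L(s,\chi))$. Formally $S_{1}\to 0$ and $S_{2}\to -1/L(1,\chi)$ as $D\to\infty$ — the latter because $(\zeta'(s)/\zeta(s))\cdot(1/\zeta(s)) \to -1$ as $s\to 1$ while the $L'/L$ contribution is killed by the zero of $1/\zeta$ — so substituting these values produces precisely the desired $y$. Quantitatively one shifts a Perron-type contour past the origin (for $\sum_{m\leqslant M} \nu(m)/m$) and obtains these limits together with truncation errors governed by the standard zero-free region of $\zeta(s)L(s,\chi)$; these errors must be kept within $O(L(1,\chi) y(\log x)^{9})$.

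Next, the error term $O(D^{1/3}(x/m)^{1/3+\varepsilon})$ from formula (13) must be controlled after summation against $|\nu(m)|\leqslant\tau(m)$ over $m\leqslant D^{2}$. A crude estimation gives $O(D^{5/3+\varepsilon}x^{1/3+\varepsilon})$, which is insufficient for the stated range. To reach the admissibility $y>D^{5/2}x^{3/8}$, one bypasses (13) and applies directly a Van der Corput / Huxley-type bound of the form $O(D^{1/2}(x/m)^{3/8+\varepsilon})$ to the exponential sum extracted (via Gauss sums on $\chi$) from $\mathcal{D}(x/m;\lambda')$, then sums trivially over $m\leqslant D^{2}$ with $|\nu(m)|\leqslant\tau(m)$ to get $O(D^{5/2}x^{3/8+\varepsilon})$, matching the condition.

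The main obstacle will be the second step: carrying out effective Perron contour shifts for $1/(\zeta(s)L(s,\chi))$ is routine under a standard zero-free region for $\zeta(s)$ but becomes delicate near a potential exceptional zero of $L(s,\chi)$, and one must ensure every error source collapses into the narrow bound $L(1,\chi) y(\log x)^{9}$ rather than a larger power of $\log x$; tracking the precise exponent $9$ is the bookkeeping core of the proof.
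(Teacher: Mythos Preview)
The paper does not prove this lemma at all: it simply imports it verbatim as Corollary~4.2 of Friedlander--Iwaniec \cite{FI2004}. So there is nothing in the present paper to compare your reconstruction against, and your task is really to reproduce the FI2004 argument.

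Your opening decomposition and the formal identification of the main term via $S_{1},S_{2}$ are correct; the identity $\sum_{m\geqslant 1}\nu(m)m^{-s}=1/(\zeta(s)L(s,\chi))$ does force $S_{1}\to 0$ and $S_{2}\to -1/L(1,\chi)$ as you say. The real content, however, is exactly the obstacle you flag in your last paragraph, and your proposed resolution --- a Perron contour shift for $1/(\zeta(s)L(s,\chi))$ --- is the wrong tool here. If $\chi$ is exceptional then $L(s,\chi)$ may have a real zero $\beta$ extremely close to $1$, and $1/(\zeta L)$ acquires a pole there; any contour argument would pick up a residue of size roughly $1/((1-\beta)L'(\beta,\chi))$, which is not of the shape $O(L(1,\chi)(\log x)^{9})$ in general. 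The Friedlander--Iwaniec argument avoids the zeros of $L(s,\chi)$ entirely: it works elementarily with the convolution structure $\nu=\mu*\mu\chi$ and the nonnegativity $\lambda'\geqslant 0$, so that every error term either carries an explicit factor $L(1,\chi)$ from the main term of (12)--(13) or is controlled by a short character sum. That is how one obtains the clean power $(\log x)^{9}$ with an absolute constant rather than the $x^{\varepsilon}$ that would leak in from (13) or from any zero-density input.

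A smaller point: your arithmetic in the third step does not close. With a pointwise bound $D^{1/2}(x/m)^{3/8}$ and $|\nu(m)|\leqslant\tau(m)$, summing over $m\leqslant D^{2}$ gives $D^{1/2}x^{3/8}\sum_{m\leqslant D^{2}}\tau(m)m^{-3/8}\asymp D^{7/4}x^{3/8}\log D$, not $D^{5/2}x^{3/8}$. The threshold $D^{5/2}x^{3/8}$ in the lemma arises instead from a bound uniform in $m$ (essentially $D^{1/2}x^{3/8}$, as you would get by estimating the full sum over $n\leqslant x$ before splitting off $m$) multiplied by $\sum_{m\leqslant D^{2}}\tau(m)\ll D^{2}\log D$; you should revise the order of operations accordingly.
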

By Lemma~\ref{l41} and similar arguments as in \cite{FI2004}, for $D^{2} x^{\frac{492293}{1000000}}<y \leqslant x$ we obtain the following asymptotic formula
\begin{equation}
\mathcal{D}\left(x ; \rho_{*}\right)-\mathcal{D}\left(x-y ; \rho_{*}\right)=L(1, \chi) y\{\log x+O(1)\}.
\end{equation}

Now by (16) and
\begin{equation}
\left|\psi_{*}(x)-\psi_{*}(x-y)\right| \leqslant (\log x) \sum_{\delta \leqslant \Delta}(2 \tau(\delta))^{A}\left[\mathcal{D}\left(x \delta^{-1} ; \rho_{*}\right)-\mathcal{D}\left((x-y) \delta^{-1} ; \rho_{*}\right)\right]
\end{equation}
with the bound
\begin{equation}
\sum_{\delta \leqslant \Delta} \tau(\delta)^{A} \delta^{-1} \ll(\log \Delta)^{2^{A}+1}
\end{equation}
where $\Delta=x^{\frac{1}{r}}, A=\frac{r \log r}{\log 2}$ and $r$ is a positive integer, we have

\begin{lemma}\label{l43}
For $D^{\frac{5}{2}} x^{\frac{492293}{1000000}+\frac{507707}{1000000 r}}<y \leqslant x$ with $r \geqslant 1$, we have
\begin{equation}
\psi_{*}(x)-\psi_{*}(x-y) =O\left(L(1, \chi) y(\log x)^{r^{r}+3}\right),
\end{equation}
where the implied constant depends only on $r$.
\end{lemma}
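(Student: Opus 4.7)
The plan is to take equation (17) as the starting point and bound each inner piece using (16), then sum over $\delta \leqslant \Delta$ with the divisor-weighted bound (18). Throughout I keep $\Delta = x^{1/r}$ and $A = r\log r/\log 2$ as fixed in the setup preceding the lemma.

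First I would verify that (16) is applicable to the scaled pair $(x\delta^{-1},y\delta^{-1})$ uniformly for every $\delta \leqslant \Delta$. The hypothesis needed by (16) is
$$y\delta^{-1} > D^{2}(x\delta^{-1})^{492293/1000000}, \quad \text{i.e.,} \quad y > D^{2} x^{492293/1000000} \delta^{507707/1000000}.$$
Since $\delta \leqslant \Delta = x^{1/r}$, this is implied by $y > D^{2} x^{492293/1000000 + 507707/(1000000\,r)}$, which in turn is weaker than the hypothesis of the lemma (the $D^{5/2}$ factor is chosen stronger than necessary to match the condition appearing for $\psi^{*}$ in Lemma~\ref{l42}, so that the two estimates can later be combined). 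Consequently (16) yields, uniformly in $\delta$,
$$\mathcal{D}(x\delta^{-1};\rho_{*}) - \mathcal{D}((x-y)\delta^{-1};\rho_{*}) \ll L(1,\chi)\,\frac{y}{\delta}\,\log x.$$

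Substituting this bound into (17) and pulling the $\delta$-independent factors out of the sum, I would obtain
$$|\psi_{*}(x) - \psi_{*}(x-y)| \ll L(1,\chi)\, y\, (\log x)^{2} \sum_{\delta \leqslant \Delta} (2\tau(\delta))^{A}\,\delta^{-1}.$$
Since $2^{A} = 2^{r\log r/\log 2} = r^{r}$, the sum equals $r^{r} \sum_{\delta \leqslant \Delta}\tau(\delta)^{A}\delta^{-1}$, and an application of (18) together with $\log \Delta = (\log x)/r$ gives
$$\sum_{\delta \leqslant \Delta}(2\tau(\delta))^{A}\delta^{-1} \ll r^{r}\bigl(\log\Delta\bigr)^{r^{r}+1} = r^{\,r - r^{r} - 1}(\log x)^{r^{r}+1} \ll (\log x)^{r^{r}+1},$$
the implied constant depending only on $r$. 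Collecting the three powers of $\log x$ (one from (17), one from (16), and $r^{r}+1$ from the divisor sum) produces the desired bound $L(1,\chi)\,y\,(\log x)^{r^{r}+3}$.

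The only genuine point of care is the uniformity of the condition required by (16) across all $\delta \leqslant \Delta$; this is precisely what forces the inflated exponent $492293/1000000 + 507707/(1000000\,r)$ in the lower bound on $y$. Everything else is a standard manipulation: an $L$-function style asymptotic substituted into a divisor-weighted decomposition, followed by a classical $\tau^{A}$-sum bound. No new estimate of exponential-sum type is required beyond what is already encapsulated in (16) (hence, ultimately, in our Theorem~\ref{New-L}).
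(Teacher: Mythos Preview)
Your proposal is correct and follows exactly the paper's own argument: substitute the asymptotic (16) for each scaled pair $(x\delta^{-1},y\delta^{-1})$ into the inequality (17), then control the resulting sum $\sum_{\delta\leqslant\Delta}(2\tau(\delta))^{A}\delta^{-1}$ via (18) with $2^{A}=r^{r}$ and $\Delta=x^{1/r}$. Your verification that the hypothesis of (16) holds uniformly for all $\delta\leqslant\Delta$ is precisely the reason for the inflated exponent in the lemma's lower bound on $y$, and your count of the $\log x$ factors matches the stated $r^{r}+3$.
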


Adding (19) to (15) we find that
\begin{equation}
\psi(x)-\psi(x-y) =O\left(L(1, \chi) y(\log x)^{r^{r}+3}\right),
\end{equation}
subject to the conditions of Lemma~\ref{l43}. We require $D \leqslant x^{\frac{1}{r}}$ and then choose a number $\theta$ with $\frac{492293}{1000000}<\theta<\frac{1}{2}$ satisfies
$$
\frac{5}{2r}+\frac{492293}{1000000}+\frac{507707}{1000000 r} \leqslant \theta.
$$
By choosing $\theta=0.4923$ and $r=433433$, we complete the proof of Theorem~\ref{0.4923}. We remark that the range $0.4923 \leqslant \alpha \leqslant 1$ is rather near to the limit obtained by this method. In his preprint \cite{Merikoski}, Merikoski mentioned that some sieve arguments can be used to this problem.

\bibliographystyle{plain}
\bibliography{bib}
\end{document}